\newtheorem{theorem}{Theorem}[section]
\newtheorem{proposition}[theorem]{Proposition}
\newtheorem{corollary}[theorem]{Corollary}
\newtheorem{lemma}[theorem]{Lemma}
\newtheorem{definition}[theorem]{Definition}
\newtheoremstyle{remarkstyle}
{}
{}
{}
{}
{\bfseries}
{.} 
{7pt}
{}
\theoremstyle{remarkstyle}
\newtheorem{remark}[theorem]{Remark}
\newtheorem*{acknowledgements}{Acknowledgements}
\newcommand{\N}{\mathbb{N}}
\newcommand{\R}{\mathbb{R}}
\DeclareMathOperator{\tr}{tr}
\DeclareMathOperator{\id}{id}
\DeclareMathOperator{\vol}{vol}
\newcommand{\der}[2]{\frac{d #1}{d #2}}
\newcommand{\parder}[2]{\frac{\partial #1}{\partial #2}}
\DeclarePairedDelimiter{\norm}{\lVert}{\rVert}
\DeclarePairedDelimiter{\abs}{\lvert}{\rvert}
\DeclarePairedDelimiterX{\inner}[2]{\langle}{\rangle}{#1, #2}
\newcommand{\presuperscript}[2]{\prescript{#1}{}{#2}}
\title{Exponential convergence rate of the harmonic heat flow}
\author{Ivo Slegers}
\begin{document}
\maketitle
	
\begin{abstract}
We consider the harmonic heat flow for maps from a compact Riemannian manifold into a Riemannian manifold that is complete and of non-positive curvature. We prove that if the harmonic heat flow converges to a limiting harmonic map that is a non-degenerate critical point of the energy functional, then the rate of convergence is exponential (in the $L^2$ norm).
\end{abstract}

\section{Introduction} 
The harmonic heat flow was introduced by Eells and Sampson in \cite{EellsSampson}. They used it to prove one of the first general existence results for harmonic maps between Riemannian manifolds. Since then the harmonic heat flow has been an important tool in many existence results for harmonic maps. It has also been studied much as a subject of investigation in its own right.

Suppose $(M,g)$ and $(N,h)$ are Riemannian manifolds and $f \colon M \to N$ a smooth map. The harmonic heat flow is an evolution equation on one-parameter families of smooth maps $(f_t \colon M \to N)_{t\in [0,\infty)}$ that continuously deforms $f$ into a harmonic map. The parameter $t$ is often thought of as a time parameter. The harmonic heat flow equation is
\begin{align}\label{eq:heatflow}
\begin{split}
\der{f_t}{t} &= \tau(f_t)\\
f_0 &= f.
\end{split}
\end{align}
Here $\tau(f_t)$ is the tension field of $f_t$ (see \Cref{sec:heatflowpreliminaries}). Eells and Sampson prove in \cite{EellsSampson} (with contributions of Hartman in \cite{Hartman}) that if $M$ is compact and $N$ is complete and has non-positive curvature, then a solution of \Cref{eq:heatflow} exists for all $t\geq 0$. Moreover, if the images of the maps $f_t$ stay within a compact subset of $N$, then the harmonic heat flow converges, for $t\to\infty$, to a harmonic map $f_\infty \colon M\to N$ that is homotopic to $f$.

In this note we prove that when the limiting map satisfies a certain non-degeneracy condition (which will elaborated on in \Cref{sec:heatflowpreliminaries}), then the rate of convergence of the harmonic heat flow is exponential.

\begin{theorem}\label{thm:exponentialconvergence}
Let $(M,g)$ and $(N,h)$ be Riemannian manifolds with $M$ compact and with $N$ complete and of non-positive curvature. Let $(f_t)_{t\in [0,\infty)}$ be a solution to the harmonic heat flow equation. Assume that the maps $f_t$ converge to a limiting harmonic map $f_\infty \colon M \to N$, as $t\to\infty$, and assume that $f_\infty$ is a non-degenerate critical point of the Dirichlet energy functional. Then there exist constants $a,b>0$ such that
\[
\norm*{\der{f_t}{t}}_{L^2(f_t^*TN)} \leq a \cdot e^{-b \cdot t}
\]
for all $t\geq 0$. Moreover, the exponential decay rate (the constant $b$) depends only on $f_\infty$.
\end{theorem}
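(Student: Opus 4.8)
The plan is to track the evolution of the "kinetic energy" $e(t) := \frac{1}{2}\norm{\partial_t f_t}_{L^2}^2$ and show it satisfies a differential inequality of the form $e'(t) \leq -2b\, e(t)$, which integrates to exponential decay. The starting point is the standard Bochner-type computation for the harmonic heat flow: writing $v_t := \partial_t f_t$, a section of $f_t^*TN$, the heat flow equation gives a linear parabolic equation for $v_t$ along the flow, schematically $\nabla_t v_t = \Delta_{f_t} v_t + R(v_t)$, where $\Delta_{f_t}$ is the (rough or Hodge) Laplacian on $f_t^*TN$ and $R$ involves the curvature of $N$ composed with $df_t$. Differentiating $e(t)$ and integrating by parts yields
\begin{equation*}
e'(t) = -\norm{\nabla v_t}_{L^2}^2 + \int_M \inner{R^N(v_t, df_t)df_t}{v_t}\, d\vol_g.
\end{equation*}
Since $N$ has non-positive curvature the curvature term is $\leq 0$, so already $e'(t) \leq -\norm{\nabla v_t}_{L^2}^2$. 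This gives decay but not obviously exponential decay; the issue is that $\norm{\nabla v_t}_{L^2}^2$ need not be comparable to $\norm{v_t}_{L^2}^2 = 2e(t)$ without a spectral gap.

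**Bringing in non-degeneracy.** This is where the hypothesis that $f_\infty$ is a non-degenerate critical point enters, and I expect the interplay between "the flow is near $f_\infty$" and "the linearized operator at $f_\infty$ has a spectral gap" to be the main obstacle. Non-degeneracy means the Jacobi operator $J_{f_\infty}$ — the second variation of the Dirichlet energy at $f_\infty$, which is $-\Delta_{f_\infty} - \operatorname{tr} R^N(\cdot, df_\infty)df_\infty$ acting on sections of $f_\infty^*TN$ — has trivial kernel, hence (being elliptic, self-adjoint, with discrete spectrum on compact $M$) is bounded below: $\inner{J_{f_\infty}\xi}{\xi}_{L^2} \geq \lambda \norm{\xi}_{L^2}^2$ for some $\lambda > 0$ depending only on $f_\infty$. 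Note $\inner{J_{f_\infty}\xi}{\xi} = \norm{\nabla\xi}_{L^2}^2 - \int \inner{R^N(\xi,df_\infty)df_\infty}{\xi} \geq \norm{\nabla\xi}_{L^2}^2$, so in fact the Poincaré-type inequality $\norm{\nabla\xi}_{L^2}^2 \geq \lambda\norm{\xi}_{L^2}^2$ holds at $f_\infty$. The strategy is then: since $f_t \to f_\infty$ (in $C^1$, say, which follows from the Eells–Sampson convergence theory together with parabolic regularity — one should check what norm of convergence is actually available, and this may require invoking higher-order estimates or $L^2$-gradient-flow convergence arguments à la Łojasiewicz, but in the non-degenerate case elementary arguments suffice), for $t$ large the operators $J_{f_t}$ and the geometry along $f_t$ are uniformly close to those at $f_\infty$. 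Hence there is $T_0$ and $\lambda' > 0$ (depending only on $f_\infty$) with $\norm{\nabla v_t}_{L^2}^2 \geq \lambda' \norm{v_t}_{L^2}^2$ for all $t \geq T_0$, giving $e'(t) \leq -2\lambda' e(t)$ on $[T_0,\infty)$ and therefore $e(t) \leq e(T_0) e^{-2\lambda'(t-T_0)}$ there.

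**Finishing and uniformizing.** From $e(t) \leq e(T_0)\, e^{-2\lambda'(t - T_0)}$ for $t \geq T_0$ we get $\norm{\partial_t f_t}_{L^2} \leq \sqrt{2 e(T_0)}\, e^{\lambda' T_0} \cdot e^{-\lambda' t}$, which is the claimed bound with $b = \lambda'$ on $[T_0,\infty)$. On the compact interval $[0,T_0]$ the quantity $\norm{\partial_t f_t}_{L^2}$ is continuous hence bounded by some constant $C$, and choosing $a$ large enough (e.g. $a = \max(C e^{b T_0}, \sqrt{2e(T_0)}\,e^{\lambda' T_0})$) the inequality $\norm{\partial_t f_t}_{L^2} \leq a e^{-bt}$ holds for all $t \geq 0$. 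Crucially $b = \lambda'$ was extracted from the spectral gap of $J_{f_\infty}$ together with a uniform-closeness threshold both determined by $f_\infty$ alone, so the decay rate depends only on $f_\infty$ as asserted; the prefactor $a$ depends on the particular flow through $e(T_0)$ and the bound on $[0,T_0]$.

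**Main obstacle.** The technical heart is step two: making precise that proximity of $f_t$ to $f_\infty$ transfers the spectral gap of $J_{f_\infty}$ to the differential inequality for $e(t)$. Two sub-points need care: (i) which topology $f_t \to f_\infty$ holds in, and whether that suffices to control $J_{f_t} - J_{f_\infty}$ as operators (it is enough to control $df_t - df_\infty$ in $L^\infty$ or even just the curvature term in a suitable norm, so $C^1$-convergence is plenty); and (ii) ensuring the constants $T_0$ and $\lambda'$ genuinely depend only on $f_\infty$ and not on the flow — this is fine for $\lambda'$ (a fixed fraction of $\lambda$, valid once $f_t$ is within a fixed $f_\infty$-determined neighbourhood) but the time $T_0$ at which the flow enters that neighbourhood does depend on the flow; fortunately $T_0$ only affects $a$, not $b$. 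One should double-check the precise form of the evolution equation for $v_t$ and the sign of the curvature contribution, but these are standard (Eells–Sampson, Hartman).
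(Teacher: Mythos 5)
Your broad strategy --- track $e(t)=\tfrac12\norm*{\der{f_t}{t}}^2_{L^2}$, use the evolution equation to express $e'(t)$ in terms of the Jacobi operator $\mathcal{J}_{f_t}$, and close a Gr\"onwall inequality once $\lambda_1(\mathcal{J}_{f_t})$ is bounded below by a positive constant for large $t$ --- is exactly the paper's (\Cref{lem:evolutionequation} plus \Cref{prop:spectralgapconvergence} plus Gr\"onwall). But there are two problems, one a local error and one the central missing step.

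First, the claimed ``Poincar\'e-type inequality'' is a non-sequitur. You note $\inner{\mathcal{J}_{f_\infty}\xi}{\xi}\ge\norm{\nabla\xi}^2_{L^2}$ (true, since the curvature term is nonnegative under nonpositive curvature) and $\inner{\mathcal{J}_{f_\infty}\xi}{\xi}\ge\lambda\norm{\xi}^2_{L^2}$ (true, by non-degeneracy). From $A\ge B$ and $A\ge C$ one cannot conclude $B\ge C$, so $\norm{\nabla\xi}^2_{L^2}\ge\lambda\norm{\xi}^2_{L^2}$ does not follow; it can genuinely fail when the spectral gap of $\mathcal{J}_{f_\infty}$ comes from the curvature term rather than the rough Laplacian. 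The fix is to not discard the curvature term: retain $e'(t)=-\inner{\mathcal{J}_{f_t}\tau(f_t)}{\tau(f_t)}$ and bound this directly by $-\lambda_1(\mathcal{J}_{f_t})\cdot 2e(t)$ via the min-max characterization (\Cref{eq:minmax}).

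Second, and more seriously, the step you yourself flag as ``the technical heart'' --- that $C^1$ (or whatever) proximity of $f_t$ to $f_\infty$ forces $\lambda_1(\mathcal{J}_{f_t})\ge\lambda'>0$ for $t$ large --- is left essentially unproved, and it is precisely the content of the paper's \Cref{prop:spectralgapconvergence}, whose proof occupies most of Section~3. The difficulty is not merely controlling $df_t-df_\infty$ in $L^\infty$: the operators $\mathcal{J}_{f_t}$ and $\mathcal{J}_{f_\infty}$ act on sections of \emph{different} vector bundles $f_t^*TN$ and $f_\infty^*TN$, so ``the operators are uniformly close'' is not even a well-posed statement without first constructing bundle identifications. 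The paper builds local isomorphisms $\psi_{t,p}\colon f_t^*TN\vert_{\widetilde U_p}\to f_\infty^*TN\vert_{\widetilde U_p}$ over adapted charts, shows the transported operators have $C^1$-close coefficients (which requires $C^3$-convergence of $f_t$, not $C^1$ --- the Schauder machinery needs coefficient regularity), extracts a convergent subsequence of lowest eigensections via Schauder estimates and Arzel\`a--Ascoli, patches the local limits into a global eigensection of $\mathcal{J}_{f_\infty}$, and thereby obtains lower semicontinuity of $\lambda_1$. Without some version of this construction the spectral-gap persistence is an unproved assertion, and the proposal has a genuine gap at exactly the place where the real work of the paper is done.

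Everything else --- the evolution equation, the identification of $\partial_t f_t$ with $\tau(f_t)$, the role of $T_0$ affecting only $a$ and not $b$, the derivation of the decay rate from $\lambda_1(\mathcal{J}_{f_\infty})$ --- matches the paper and is fine.
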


The exponential convergence rate of the harmonic heat flow has been observed before in several different settings. For example, in \cite{Topping} Topping proved that the harmonic heat flow for maps between 2-spheres converges exponentially fast in $L^2$ as $t\to\infty$. Similarly, in \cite{Wang} it is shown that the heat flow for mappings from the unit disk in $\R^2$ into closed Riemannian manifolds converges exponentially fast in $H^1$ when we assume that the Dirichlet energy along the heat flow is small. 

Our result shows that this exponential convergence behaviour is actually present in a large class of examples. For instance, if $(N,h)$ has negative curvature, then any harmonic map into $N$ that does not map into the image of a geodesic is a non-degenerate critical point of the energy. Another example is provided by equivariant harmonic maps mapping into symmetric spaces of non-compact type. A result of Sunada (\cite{Sunada}) implies that such harmonic maps are non-degenerate critical points of the energy if and only if they are unique (see \cite[Lemma 2.1]{Slegers1}).

As a corollary to \Cref{thm:exponentialconvergence} we obtain that the Dirichlet energies along the harmonic heat flow also converge exponentially fast. For a smooth map $f\colon (M,g) \to (N,h)$ we denote by $E(f)$ its Dirichlet energy (see \Cref{sec:heatflowpreliminaries}).

\begin{corollary}\label{cor:convergenceenergy}
Let $(f_t)_{t\in[0,\infty)}$,  $f_\infty$ and $b>0$ be as in $\Cref{thm:exponentialconvergence}$. Then there exists a constant $a'>0$ such that for all $t\geq 0$ we have
\[
\abs{E(f_t) - E(f_\infty)} \leq a'\cdot e^{-2b\cdot t}.
\]
\end{corollary}

\begin{acknowledgements}
The author wishes to thank Prof. Ursula Hamenst\"adt for many useful discussions. The author was supported by the IMPRS graduate program of the Max Planck Institute for Mathematics.
\end{acknowledgements}

\section{Preliminaries}\label{sec:heatflowpreliminaries}
We briefly introduce the concepts related to harmonic maps that we will need in our proof. We follow mostly the presentation given in \cite{EellsLemaireSelectedTopics} (see also \cite{EellsSampson}).

Let $(M,g)$ and $(N,h)$ be Riemannian manifolds and assume $M$ is compact. For any vector bundle $E\to M$ we denote by $\Gamma^k(E)$ the Banach space of $k$-times continuously differentiable sections of $E$. For any smooth map $f \colon M \to N$ let us denote by $\nabla$ the pullback connection on $f^*TN \to M$ induced by the Levi-Civita connection of $N$. By taking the tensor product with the Levi-Civita connection on $M$ we obtain an induced connection on the bundle $T^*M\otimes f^*TN$ which we will also denote by $\nabla$.

A smooth map $f \colon (M,g) \to (N,h)$ is a \textit{harmonic map} if it is a critical point of the Dirichlet energy
\[
E(f) = \frac{1}{2} \int_M \norm{df}^2 \vol_g.
\]
Here we consider $df$ as a section of the bundle $T^*M \otimes f^*TN$ that is equipped with the metric induced by the metrics $g$ and $h$. The \textit{tension field} of $f$ is the smooth section of $f^*TN$ that is defined as
\[
\tau(f) = \tr_g \nabla df = \sum_{i=1}^m (\nabla_{e_i} df)(e_i)
\]
where $(e_i)_{i=1}^m$ is any local orthonormal frame of $TM$ and $\nabla$ is the connection on $T^*M\otimes f^*TN$. A map $f \colon (M,g) \to (N,h)$ is harmonic if and only if its tension field vanishes identically.

The metric $g$ on $M$ and the metric on $f^*TN$ induced by the metric on $N$ give rise to the $L^2$ inner product
\[
\inner{s}{s'}_{L^2(f^*TN)} = \int_M \inner{s(m)}{s'(m)} \vol_g(m)
\]
for $s,s'\in \Gamma^0(f^*TN)$. The space $L^2(f^*TN)$ is defined to be the completion of $\Gamma^0(f^*TN)$ with respect to this inner product.

The Laplace operator induced by the pullback connection $\nabla$ on $f^*TN$ is the operator $\Delta \colon \Gamma^2(f^*TN) \to \Gamma^0(f^*TN)$ that is given by
\[
\Delta s = -\tr_g \nabla^2 s = - \sum_{i=1}^m (\nabla^2 s)(e_i,e_i)
\]
for $s\in \Gamma^2(f^*TN)$ and any (local) orthonormal frame $(e_i)_{i=1}^m$ of $TM$.  
 
\begin{definition}
We define the Jacobi operator of a smooth map $f\colon M \to N$ to be the second order differential operator that acts on sections of $f^*TN$ as
\[
\mathcal{J}_f(s) = \Delta s -  \tr_g R^N(s, df \cdot)df\cdot = - \sum_{i=1}^m \left[
(\nabla^2 s)(e_i,e_i) + R^N(s, df (e_i)) df(e_i)
\right]
\]
where $s\in \Gamma^2(f^*TN)$, $R^N$ is the curvature tensor\footnote{We define the curvature tensor as $R(X,Y)Z = \nabla_X \nabla_Y Z - \nabla_Y \nabla_X Z - \nabla_{[X,Y]}Z$ which differs from the convention chosen in \cite{EellsLemaireSelectedTopics}.} of $(N,h)$ and $(e_i)_{i=1}^m$ is any (local) orthonormal fame of $TM$.
\end{definition}

We can interpret the Jacobi operator as a densely defined operator 
\[
\mathcal{J}_f \colon L^2(f^*TN) \to L^2(f^*TN).
\]
It is a linear elliptic and self-adjoint operator. Standard spectral theory for such operators implies the following facts.

\begin{proposition}\label{eq:spectraltheoryjacobioperators}
The Hilbert space $L^2(f^*TN)$ splits orthogonally into eigenspaces of $\mathcal{J}_f$. These eigenspaces are finite dimensional and consist of smooth sections. The spectrum of $\mathcal{J}_f$ is discrete and consists of real numbers. If $(N,h)$ is non-positively curved, then the eigenvalues of $\mathcal{J}_f$ are non-negative.
\end{proposition}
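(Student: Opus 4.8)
The statement collects standard consequences of the fact, already noted above, that $\mathcal{J}_f$ is a second-order linear elliptic self-adjoint operator over the closed manifold $M$, so the plan is to recall why the standard machinery applies and then to establish the curvature sign statement by a direct computation of the associated quadratic form. Writing $\mathcal{J}_f = \Delta - \mathcal{R}_f$ with $\mathcal{R}_f(s) = \tr_g R^N(s,df\cdot)df\cdot$, the endomorphism $\mathcal{R}_f$ is a smooth bundle map (a zeroth order operator), so $\mathcal{J}_f$ has the same principal symbol $\xi\mapsto\abs{\xi}^2\id$ as $\Delta$ and is elliptic; integration by parts on the closed manifold $M$ together with the symmetries of $R^N$ (antisymmetry in each pair of arguments and the pair symmetry $\inner{R^N(A,B)C}{D}=\inner{R^N(C,D)A}{B}$) shows that $\mathcal{J}_f$ is formally self-adjoint. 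Since $f(M)$ is compact the endomorphism $\mathcal{R}_f$ is uniformly bounded, hence $\mathcal{J}_f$ is bounded below: $\inner{\mathcal{J}_f s}{s}_{L^2}\geq -C\norm{s}_{L^2}^2$ for some $C\geq 0$.

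Taking the self-adjoint realization of $\mathcal{J}_f$ with domain the Sobolev space $H^2(f^*TN)$, the operator $\mathcal{J}_f+C+1$ is positive and is an isomorphism $H^2(f^*TN)\to L^2(f^*TN)$ (it is Fredholm of index zero by ellipticity and injective by positivity). Composing its inverse with the Rellich compact embedding $H^2(f^*TN)\hookrightarrow L^2(f^*TN)$ (here compactness of $M$ is used) exhibits $(\mathcal{J}_f+C+1)^{-1}$ as a compact positive self-adjoint operator on $L^2(f^*TN)$. The spectral theorem for compact self-adjoint operators then yields an orthogonal decomposition of $L^2(f^*TN)$ into finite-dimensional eigenspaces with eigenvalues accumulating only at $0$; translating back, $\mathcal{J}_f$ has discrete real spectrum and finite-dimensional, mutually orthogonal eigenspaces whose span is dense in $L^2(f^*TN)$. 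Finally, if $\mathcal{J}_f s=\lambda s$ with $s\in H^2(f^*TN)$ then $\mathcal{J}_f s\in H^2(f^*TN)$, so elliptic regularity gives $s\in H^4(f^*TN)$, and iterating places $s$ in every Sobolev space, hence $s$ is smooth by the Sobolev embedding theorem; this upgrades the decomposition above to one into spaces of smooth sections.

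For the last assertion, assume $N$ is non-positively curved. For a smooth section $s$, integration by parts yields
\[
\inner{\mathcal{J}_f s}{s}_{L^2}=\norm{\nabla s}_{L^2}^2-\int_M\sum_{i=1}^m\inner{R^N(s,df(e_i))df(e_i)}{s}\,\vol_g .
\]
The first term is non-negative, and with the curvature convention used in the paper the quantity $\inner{R^N(X,Y)Y}{X}$ is, up to a positive factor, the sectional curvature of the plane spanned by $X$ and $Y$; thus each summand $\inner{R^N(s,df(e_i))df(e_i)}{s}$ is $\leq 0$ pointwise and the integral term is $\leq 0$. Hence $\inner{\mathcal{J}_f s}{s}_{L^2}\geq 0$ on the dense subspace of smooth sections, and therefore every eigenvalue of $\mathcal{J}_f$ is non-negative. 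The argument is entirely routine; the only points that need care are choosing the domain that makes $\mathcal{J}_f$ genuinely self-adjoint and keeping the sign conventions for $R^N$ consistent so that the quadratic form is bounded below (and non-negative in the non-positively curved case).
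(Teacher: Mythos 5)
Your proposal is correct and follows the same approach the paper takes: the paper simply cites standard references (Wells, Eells--Lemaire) for the spectral-decomposition facts and then observes, exactly as you do, that $\Delta$ is a positive operator and that non-positive sectional curvature makes $-\sum_i\inner{R^N(s,df(e_i))df(e_i)}{s}\geq 0$ pointwise. You have merely unpacked the cited spectral theory (compact resolvent via Rellich, spectral theorem, elliptic bootstrap for smoothness), which is fine but not a different route.
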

\begin{proof}
See \cite[Chapter IV]{Wells} (cf. \cite[Section 4]{EellsLemaireSelectedTopics}). It is proved in \cite[Proposition 1.23]{EellsLemaireSelectedTopics} that $\Delta$ is a positive operator. If $(N,h)$ is non-positively curved, then 
\[
-\tr_g \inner{R^N(s, df \cdot)df\cdot}{s} = -\sum_{i=1}^m \inner{R^N(s, df (e_i)) df(e_i)}{s} \geq 0
\]
for any $s\in \Gamma^0(f^*TN)$ and hence it follows that the eigenvalues of $\mathcal{J}_f$ are non-negative. 
\end{proof}

When $(N,h)$ has non-positive curvature it follows that each $\mathcal{J}_f$ has a well-defined lowest eigenvalue which we will denote by $\lambda_1(\mathcal{J}_f) \geq 0$. This quantity is called \textit{the spectral gap} of the operator $\mathcal{J}_f$. Using the min-max theorem the value $\lambda_1(\mathcal{J}_f)$ can alternatively be characterised as
\begin{equation}\label{eq:minmax}
\lambda_1(\mathcal{J}_t) = \min_{\substack{s\in \Gamma^2(f^*TN) \\ s\neq 0}} \frac{\inner{\mathcal{J}_f s}{s}_{L^2(f^*TN)}}{\norm{s}^2_{L^2(f^*TN)}}.
\end{equation}

If $f$ is harmonic, then the second variation of the energy at $f$ is given by
\[
\nabla^2 E(f)(s,s') = \int_{M} \left[\inner{\nabla s}{\nabla s'} - \tr_g \inner{R^n(s, df \cdot) df \cdot}{s'}\right] \vol_g = \inner{\mathcal{J}_f s}{s'}_{L^2(f^*TN)}
\]
for any $s, s'\in \Gamma^2(f^*TN)$. We stress that this equation only holds when $f$ is harmonic. A harmonic map is a non-degenerate critical point of the energy if the bilinear form $\nabla^2 E(f)$ is non-degenerate. This happens if and only if $\ker \mathcal{J}_f = 0$. In the case that $(N,h)$ has non-positive curvature this is equivalent to $\lambda_1(\mathcal{J}_f) > 0$.

As mentioned in the introduction, the existence of a solution to the harmonic heat flow equation is due to Eells and Sampson. We record the facts relevant to our proof here in the following theorem. We denote by $C^k(M,N)$ the Banach manifold of $k$-times continuously differentiable maps from $M$ to $N$.
\begin{theorem}\label{thm:heatflowfacts}
Assume $(M,g)$ is compact and $(N,h)$ is complete and of non-positive curvature. Let $f\colon M \to N$ be a smooth map. A solution $(f_t)_{t\in [0,\infty)}$ to the harmonic heat flow equation (\Cref{eq:heatflow}) exists for all time $t\geq 0$ and the map 
\[
M\times [0,\infty) \to N \colon (m,t) \mapsto f_t(m)
\]
is smooth. Moreover, if the image of this map is contained in a compact subset of $N$, then the maps $f_t$ converge, for $t\to\infty$, to a harmonic map $f_\infty$ in any space $C^k(M,N)$.
\end{theorem}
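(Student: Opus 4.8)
The plan is to follow the original arguments of Eells--Sampson \cite{EellsSampson}, with the convergence of the full flow supplied by Hartman \cite{Hartman}. First I would set up the analytic framework. Using the Nash embedding theorem to realise $(N,h)$ isometrically inside a Euclidean space $\R^q$, the flow \Cref{eq:heatflow} becomes the semilinear parabolic system $\der{f_t}{t} = \Delta f_t + A(f_t)(df_t, df_t)$ for maps $M \to \R^q$ constrained to $N$, where $\Delta$ is the Laplacian of $(M,g)$ and $A$ is built from the second fundamental form of $N \subset \R^q$. The linearisation of the right-hand side has principal part $\Delta$, so the system is (uniformly) parabolic, and I would invoke standard parabolic theory --- a contraction-mapping argument for the associated integral equation in a suitable parabolic H\"older space, or the inverse function theorem on the relevant Banach manifolds of maps --- to produce a unique solution on a maximal interval $[0,T_{\max})$ with $f_0 = f$. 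Parabolic Schauder estimates and bootstrapping, using that $f$ is smooth, then give that $(m,t) \mapsto f_t(m)$ is smooth on $M \times [0,T_{\max})$.

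To show $T_{\max} = \infty$ I would use the Eells--Sampson Bochner inequality for the energy density $e(f_t) = \tfrac12 \norm{df_t}^2$. Differentiating along the flow and commuting derivatives produces, for $\left(\der{}{t} + \Delta\right) e(f_t)$, a manifestly non-positive term $-\norm{\nabla df_t}^2$, a term controlled by a lower bound on the Ricci curvature of $M$ (which exists since $M$ is compact), and a term involving $R^N$ which, because $N$ has non-positive sectional curvature, has the favourable sign. This yields $\left(\der{}{t} + \Delta\right) e(f_t) \le C\, e(f_t)$ with $C$ depending only on $M$ and a lower curvature bound for $N$, and the parabolic maximum principle gives $\sup_M e(f_t) \le e^{Ct}\sup_M e(f_0)$ for $t < T_{\max}$. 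Hence $\norm{df_t}$ is bounded on every finite time interval; feeding this into the equation and bootstrapping bounds all higher derivatives of $f_t$ on finite intervals, ruling out finite-time blow-up, so $T_{\max} = \infty$.

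For the convergence statement I would first record the energy identity $\der{}{t} E(f_t) = -\norm{\tau(f_t)}^2_{L^2(f_t^*TN)}$, which shows $E(f_t)$ is non-increasing and $\int_0^\infty \norm{\tau(f_t)}^2_{L^2}\,dt < \infty$, so that $\norm{\tau(f_{t_k})}_{L^2} \to 0$ along some $t_k \to \infty$. The hypothesis that $\bigcup_t f_t(M)$ lies in a compact set $K \subset N$ is exactly what upgrades the finite-interval estimates to bounds uniform in $t$: the inequality $\left(\der{}{t} + \Delta\right) e(f_t) \le C\, e(f_t)$ with $C = C(M,K)$ yields a parabolic sub-mean-value inequality bounding $e(f_t)(m)$ by the average of $e(f_s)$ over a fixed-size parabolic cylinder about $(m,t)$, while $\int_{t-1}^t \int_M e(f_s)\,\vol_g\,ds \le E(f_0)$ is bounded independently of $t$; hence $\sup_M e(f_t)$ is bounded uniformly in $t \ge 1$, and parabolic Schauder estimates bound $f_t$ in every $C^k(M,N)$ uniformly in $t$. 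By Arzel\`a--Ascoli, $(f_{t_k})$ subconverges in each $C^k$ to a smooth $f_\infty$ with $\tau(f_\infty) = 0$, i.e.\ $f_\infty$ is harmonic. To promote this to convergence of the full flow I would use Hartman's argument: the function $\psi(m,t) = d_N\big(f_t(m), f_\infty(m)\big)^2$ satisfies $\left(\der{}{t} + \Delta\right)\psi \le 0$ --- using that $f_\infty$ is harmonic, that $f_t$ solves the flow, and that in non-positive curvature the squared distance function is geodesically convex --- so $t \mapsto \sup_M \psi(\cdot,t)$ is non-increasing; since it tends to $0$ along $t_k$ it tends to $0$, giving $f_t \to f_\infty$ uniformly, and interpolation with the uniform $C^k$ bounds upgrades this to convergence in every $C^k(M,N)$.

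The main obstacles I expect are twofold. First, the uniform-in-$t$ energy-density bound: passing from the finite-time estimate $e^{Ct}\sup_M e(f_0)$ to a time-independent one genuinely requires combining the compact-image hypothesis with the monotonicity of the total energy in a sub-mean-value inequality, and this is the technical heart of the convergence part. Second, Hartman's distance-monotonicity step, where one must justify the differential inequality for $\psi$ at points where $d_N$ fails to be differentiable (via barrier functions) and deal carefully with the geometry of geodesics in the non-positively curved target.
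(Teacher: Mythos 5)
Your proposal is correct and takes essentially the same route as the paper: the paper does not prove this theorem itself but simply cites Eells--Sampson for existence and smoothness (with Hartman's removal of the embedding hypothesis) and Hartman's Assertion (B) for convergence of the full flow, and your sketch is a faithful reconstruction of precisely those arguments --- short-time existence via the isometric embedding and parabolic theory, the Bochner inequality and maximum principle for global existence, the energy identity plus uniform estimates and Arzel\`a--Ascoli for subconvergence, and Hartman's distance-monotonicity argument to upgrade subconvergence to convergence of the full flow.
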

The existence and smoothness of the solution is proved in \cite{EellsSampson} (Theorem 10.C p.154 and Proposition 6.B p.135). Note that Eells and Sampson prove these theorems under an additional assumption involving restrictions on a choice of isometric embedding $N\to \R^n$. Hartman proved in \cite[Assertion (A)]{Hartman} that this assumption is redundant. Finally, the convergence statement for $t\to\infty$ is proved in \cite[Assertion (B)]{Hartman}.

\section{Continuity of the spectral gap}

Our proof of \Cref{thm:exponentialconvergence} will rely on the fact that if $(f_t)_{t\in [0,\infty)}$ is a solution to the harmonic heat flow equation, then the associated family of Jacobi operators $\mathcal{J}_{f_t}$ is (in a loose sense) a continuous family of differential operators. The primary difficulty here is that these operators act on sections of different vector bundles. We deal with this problem in \Cref{prop:spectralgapconvergence} which will be the main tool in our proof.

Let us first introduce some notation. We will consider a family of smooth maps $(f_t)_{t\in [0,1]}$ and define $F \colon M \times [0,1] \to N$ as $F(m,t) = f_t(m)$. For each $t\in [0,1]$ we denote $E_t = f_t^* TN$ and $\mathcal{J}_t = \mathcal{J}_{f_t}$.

\begin{proposition}\label{prop:spectralgapconvergence}
Assume $F \colon M \times [0,1] \to N$ (as above) is continuous, each $f_t \colon M \to N$ is smooth and $[0,1] \to C^3(M,N) \colon t\mapsto f_t$ is continuous. Then
\[
\liminf_{t\to 0} \lambda_1(\mathcal{J}_t) \geq \lambda_1(\mathcal{J}_0).
\]
\end{proposition}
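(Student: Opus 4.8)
The plan is to compare the Rayleigh quotients of the operators $\mathcal{J}_t$ with that of $\mathcal{J}_0$ by transporting test sections. Fix $\varepsilon > 0$ and, using the min-max characterisation \Cref{eq:minmax}, choose a smooth $s_0 \in \Gamma^2(E_0)$ with $\norm{s_0}_{L^2(E_0)} = 1$ and $\inner{\mathcal{J}_0 s_0}{s_0}_{L^2(E_0)} \leq \lambda_1(\mathcal{J}_0) + \varepsilon$. The idea is to produce, for each small $t$, a section $s_t \in \Gamma^2(E_t)$ whose Rayleigh quotient for $\mathcal{J}_t$ is close to that of $s_0$ for $\mathcal{J}_0$; then $\lambda_1(\mathcal{J}_t) \leq \inner{\mathcal{J}_t s_t}{s_t}/\norm{s_t}^2$ is bounded above by $\lambda_1(\mathcal{J}_0) + \varepsilon + o(1)$ as $t\to 0$, which after letting $\varepsilon \to 0$ gives $\liminf_{t\to 0}\lambda_1(\mathcal{J}_t) \le \lambda_1(\mathcal{J}_0)$ — wait, that is the wrong direction. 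Let me reconsider: to get $\liminf \lambda_1(\mathcal{J}_t) \geq \lambda_1(\mathcal{J}_0)$ one should instead argue that a near-minimiser $s_t$ for $\mathcal{J}_t$ (for $t$ along a sequence realising the liminf) can be transported \emph{back} to a test section $\tilde s_0$ for $\mathcal{J}_0$ without increasing the Rayleigh quotient by more than $o(1)$; then $\lambda_1(\mathcal{J}_0) \leq \inner{\mathcal{J}_0 \tilde s_0}{\tilde s_0}/\norm{\tilde s_0}^2 \leq \lambda_1(\mathcal{J}_t) + o(1)$.

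To implement the transport of sections between the bundles $E_t = f_t^*TN$ and $E_0 = f_0^*TN$, I would use parallel transport in $TN$ along the short curves $\gamma_m(t) = F(m,t)$ for $m \in M$. For $t$ small enough (using compactness of $M$ and continuity of $F$, so that $f_t(m)$ stays in a uniformly small ball around $f_0(m)$ inside a fixed tubular-neighbourhood scale of $N$) this gives a fibrewise linear isometry $P_t \colon E_t \to E_0$ depending on $t$ as smoothly as $F$ does; one then sets $\tilde s_0 = P_t s_t$ and needs to control how far $P_t$ conjugates $\mathcal{J}_t$ to $\mathcal{J}_0$. Concretely, one expands $\inner{\mathcal{J}_0(P_t s_t)}{P_t s_t}_{L^2(E_0)}$ and compares term by term with $\inner{\mathcal{J}_t s_t}{s_t}_{L^2(E_t)}$: the volume form $\vol_g$ is fixed; the zeroth-order curvature term $R^N(s, df(e_i))df(e_i)$ differs by an amount controlled by $\norm{f_t - f_0}_{C^1}$ (hence by the assumed $C^3$-continuity of $t \mapsto f_t$), using that $R^N$ and its covariant derivative are uniformly bounded on the relevant compact region of $N$; and the second-order term, after pulling the pullback connection $\nabla^{(t)}$ on $E_t$ through $P_t$, equals $\nabla^{(0)}$ plus a bundle endomorphism-valued connection-difference term whose coefficients, together with their first derivatives, are controlled by $\norm{f_t - f_0}_{C^2}$. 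A careful bookkeeping of these differences shows
\[
\left| \inner{\mathcal{J}_0 \tilde s_0}{\tilde s_0}_{L^2(E_0)} - \inner{\mathcal{J}_t s_t}{s_t}_{L^2(E_t)} \right| \leq C(t)\left( \norm{s_t}_{L^2(E_t)}^2 + \norm{\nabla^{(t)} s_t}_{L^2(E_t)}^2 \right),
\]
with $C(t) \to 0$ as $t \to 0$, and similarly $\norm{\tilde s_0}_{L^2(E_0)}^2 = \norm{s_t}_{L^2(E_t)}^2 (1 + o(1))$.

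The remaining point — and I expect this to be the genuine obstacle — is that the error bound above involves $\norm{\nabla^{(t)} s_t}_{L^2}^2$, i.e. an $H^1$-norm of the near-minimiser, which a priori is not controlled by its Rayleigh quotient alone. One resolves this by a standard a priori estimate: for $t$ small, $\inner{\mathcal{J}_t s_t}{s_t}_{L^2(E_t)} \geq \inner{\Delta^{(t)} s_t}{s_t}_{L^2(E_t)} = \norm{\nabla^{(t)} s_t}_{L^2(E_t)}^2$ because the curvature term in $\mathcal{J}_t$ is non-negative (here non-positivity of $N$'s curvature is used again). Thus any $s_t$ whose Rayleigh quotient is close to $\lambda_1(\mathcal{J}_t)$ automatically satisfies $\norm{\nabla^{(t)} s_t}_{L^2}^2 \leq (\lambda_1(\mathcal{J}_t) + \varepsilon)\norm{s_t}_{L^2}^2$, which is uniformly bounded (since $\lambda_1(\mathcal{J}_t)$ stays bounded on $[0,1]$, as one checks by plugging in any fixed transported test section). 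Feeding this back, the error term is genuinely $o(1)\cdot\norm{s_t}_{L^2(E_t)}^2$, and dividing through by $\norm{\tilde s_0}_{L^2(E_0)}^2$ yields $\lambda_1(\mathcal{J}_0) \leq \lambda_1(\mathcal{J}_t) + \varepsilon + o(1)$. Taking the liminf over $t \to 0$ and then letting $\varepsilon \to 0$ completes the proof. The technical care needed is entirely in choosing the cutoff on $t$ so that parallel transport is well-defined and smooth, and in the term-by-term connection comparison; conceptually the argument is the classical "lower semicontinuity of the bottom of the spectrum under operator convergence" adapted to varying bundles.
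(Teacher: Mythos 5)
Your argument takes a genuinely different route from the paper's, and once one small implementation detail is repaired it is correct.

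The paper's proof is a compactness argument: it fixes a sequence $t_u\to 0$ realizing the liminf, takes genuine bottom eigensections $s_u$ of $\mathcal J_{t_u}$ normalised in $C^0$, transports them to $E_0$ via coordinate identifications in a finite cover of adapted charts, uses interior Schauder estimates (with coefficients controlled via the $C^3$-continuity of $t\mapsto f_t$) plus Arzel\`a--Ascoli to extract a $C^2$-convergent subsequence, and shows the limit is a \emph{nonzero eigensection} of $\mathcal J_0$ with eigenvalue $\lambda = \liminf \lambda_1(\mathcal J_t)$; hence $\lambda_1(\mathcal J_0)\le\lambda$. Your proof instead works purely at the level of the quadratic form and the min-max characterisation: transport a near-minimiser of the Rayleigh quotient of $\mathcal J_t$ to a test section for $\mathcal J_0$, estimate the change in the Rayleigh quotient by $o(1)(\norm{s_t}^2_{L^2}+\norm{\nabla^{(t)}s_t}^2_{L^2})$, and then close the loop with the a priori bound $\norm{\nabla^{(t)}s_t}^2_{L^2}\le \inner{\mathcal J_t s_t}{s_t}$ coming from non-positivity of $\mathrm{sec}(N)$. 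This is softer (no Schauder theory, no Arzel\`a--Ascoli), it never needs to identify the limit as an actual eigensection, and it puts the non-positive curvature hypothesis to work a second time in a clean way. The trade-off is that the paper's argument actually establishes that $\lambda$ is an eigenvalue of $\mathcal J_0$ when finite, which is strictly more information. Both proofs have to address the same core difficulty, namely comparing operators acting on different bundles, and both do so by constructing an identification of $E_t$ with $E_0$ whose effect on the relevant norms is controlled by the $C^3$-continuity of $t\mapsto f_t$; you just use that identification at the level of quadratic forms rather than of solutions.

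There is one concrete gap you should fix. You define the transport $P_t\colon E_t\to E_0$ as parallel transport in $TN$ along the curves $\gamma_m(t)=F(m,t)$. Under the hypotheses of the proposition, $F$ is only assumed \emph{continuous} in $t$ (and $t\mapsto f_t$ is continuous into $C^3(M,N)$, which also gives no $t$-derivative of $F$). Parallel transport along $\gamma_m$ requires at least a rectifiable (say absolutely continuous) curve, and a merely continuous $\gamma_m$ need not be rectifiable, so this $P_t$ is not well defined in the stated generality. The fix is exactly the one you half-gesture at: since $M$ is compact and $F$ is continuous, for $t$ small enough $f_t(m)$ lies in a uniformly small geodesically convex ball around $f_0(m)$, and you should take $P_t$ to be parallel transport along the unique minimising geodesic from $f_t(m)$ to $f_0(m)$. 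This is still a fibrewise isometry (so in fact $\norm{P_t s}_{L^2(E_0)}=\norm{s}_{L^2(E_t)}$ exactly, not just up to $o(1)$), it is $C^3$ in $m$ and continuous in $t$ under the stated assumptions, and all of your subsequent estimates go through verbatim. A small further simplification: you do not actually need to show $\lambda_1(\mathcal J_t)$ is bounded near $t=0$ in advance; just pass to a sequence $t_n\to 0$ realising $\liminf\lambda_1(\mathcal J_t)$, note the inequality is trivial if that liminf is $+\infty$, and otherwise the sequence $\lambda_1(\mathcal J_{t_n})$ is bounded by definition, which is all your $H^1$ bound needs. Also, since the spectrum is discrete (\Cref{eq:spectraltheoryjacobioperators}) you can take $s_t$ to be an actual bottom eigensection rather than a near-minimiser, which removes the $\varepsilon$ from the bookkeeping. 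Finally, the first paragraph of your write-up contains the discarded wrong-direction attempt; delete it, since the corrected direction that follows is the right one.
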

\begin{remark}
As we will see in the proof of this proposition, the statement is easily generalised to $\liminf_{t\to t_0} \lambda_1(\mathcal{J}_t) \geq \lambda_1(\mathcal{J}_{t_0})$ for $t_0 \in [0,1]$ (the choice of $t_0 = 0$ is in no way special). This means that the function $t\mapsto \lambda_1(\mathcal{J}_t)$ is lower semicontinuous. Because we don't need this full statement in our proof, we will restrict ourselves, for notational convenience, to $t_0 = 0$.
\end{remark}

As mentioned before, our main difficulty is that the differential operators $\mathcal{J}_t$ do not act on sections of the same vector bundle. To address this we first construct (local) homomorphisms between $E_t$ and $E_0$ which will allow us to locally identify these bundles.

Throughout this section we will consider the vector bundles $E_t = f_t^*TN$ as a subset of the larger vector bundle $F^*TN$ by identifying $M$ with $M\times \{t\} \subset M\times [0,1]$. Let us consider a chart $U$ of $M$ and a chart $V$ of $N$ such that for some $\epsilon>0$ the set $U\times [0,\epsilon)$ is mapped into $V$ by $F$. We will call such charts \textit{adapted charts}. To a pair of adapted charts we will associate, for $t\in [0,\epsilon)$, homomorphisms $\psi_t \colon E_t\vert_{U} \to E_0\vert_{U}$ as follows. Let us denote by $(y^\alpha)_{\alpha=1}^n$ the coordinates of the chart $V\subset N$. First, we note that $(E_\alpha)_{\alpha=1}^n$, with $E_\alpha = F^* \parder{}{y^\alpha}$, is a local frame of $F^*TN$ over $U\times [0,\epsilon)$. Furthermore, the sections $E_\alpha(\cdot, t)$ provide a frame of $E_t\vert_{U}$ for any fixed $t\in [0,\epsilon)$. If we write\footnote{Throughout this text we will use the Einstein summation convention.} an element $v\in E_t\vert_U$ as $v = v^\alpha E_\alpha(x,t)$ for some $x\in U$, then we define the map $\psi_t \colon E_t\vert_{U} \to E_0\vert_{U}$ as
\[
\psi_t(v^\alpha E_\alpha(x,t)) = v^\alpha E_\alpha(x,0).
\]
We note that for $t = 0$ we have $\psi_0 = \id$ hence, by continuity, $\psi_t$ is an isomorphism for any $t\in [0,\epsilon)$ if we take $\epsilon>0$ small enough (after possibly shrinking $U$). 

Because $M$ is compact, it can be covered by a finite set of adapted charts. More precisely, there exists an $\epsilon>0$, a finite set of charts $\{\widetilde{U}_1, \hdots, \widetilde{U}_r\}$ of $M$ and charts $\{V_1, \hdots, V_r\}$ of $N$ such that $F$ maps each $\widetilde{U}_p\times[0,\epsilon)$ into $V_p$. Let us denote by $\psi_{t,p} \colon E_t\vert_{\widetilde{U}_p} \to E_0\vert_{\widetilde{U}_p}$ the homomorphisms associated to each pair $(\widetilde{U}_p, V_p)$ of adapted charts. 

Before we proceed to the proof of \Cref{prop:spectralgapconvergence}, we will first use our choice of adapted charts to define $C^k$ norms on the spaces $\Gamma^k(E_t)$ which will be particularly well-adjusted to our arguments. Fix a $p\in \{1,\hdots, r\}$, let $(x_i)_{i=1}^m$ be the coordinates of the chart $\widetilde{U}_p \subset M$ and let $(y^\alpha)_{\alpha=1}^n$ be the coordinates of the chart $V_p \subset N$. We set, as before, $E_\alpha = F^*\parder{}{y^\alpha}$. By shrinking the open sets $\widetilde{U}_p$ slightly we can find precompact open subsets $U_p\subset \widetilde{U}_p$ such that the sets $\{U_p\}_{p=1}^r$ still cover $M$. A section $s\in \Gamma^k(E_t)$ can, locally on $\widetilde{U}_p$, be written as $s=s^\alpha E_\alpha(\cdot, t)$. Using this notation, we define, for $k\in \N$ and $t\in [0,\epsilon)$, the seminorms $\norm{\cdot}_{\Gamma^k(\overline{U}_p;E_t)}$ on $\Gamma^k(E_t)$ as
\[
\norm{s}_{\Gamma^k(\overline{U}_p;E_t)} = \sup \left\{
\abs*{\parder{^{\abs{\mu}}}{x^\mu} s^\alpha(x)} \colon x \in \overline{U}_p, 1\leq\alpha\leq n, \abs{\mu} \leq k
\right\}.
\]
Here $\mu=(\mu_1, \hdots, \mu_m)$ is a multi-index and $\parder{^{\abs{\mu}}}{x^\mu} = \parder{^{\mu_1}}{x_1^{\mu_1}}\cdots \parder{^{\mu_m}}{x_1^{\mu_m}}$. This expression is finite because $\overline{U}_p$ is compact in $\widetilde{U}_p$. We now define the norm $\norm{\cdot}_{\Gamma^k(E_t)}$ on $\Gamma^k(E_t)$ as
\[
\norm{s}_{\Gamma^k(E_t)} = \max_{p=1,\hdots, r} \norm{s}_{\Gamma^k(\overline{U}_p;E_t)}.
\]
These norms induce the usual Banach space structure on the spaces $\Gamma^k(E_t)$.

For any of the sets $U_p\subset M$, with $p=1,\hdots, r$, we will denote by $\Gamma^k(\overline{U}_p; E_t)$ the Banach space of sections of $E_t$ over $\overline{U}_p$ that extend to $k$-times differentiable sections over some open set containing $\overline{U}_p$. On this space $\norm{\cdot}_{\Gamma^k(\overline{U}_p;E_t)}$ defines a Banach norm.

By inspecting the definition of the (local) homomorphisms $\psi_{t,p} \colon E_t\vert_{\widetilde{U}_p} \to E_0\vert_{\widetilde{U}_p}$ and the seminorms $\norm{\cdot}_{\Gamma^k(\overline{U}_p;E_t)}$ we observe the following. For all $k\in \N$ and $t\in [0,\epsilon)$, if $s\in \Gamma^k(\overline{U}_p;E_t)$ is a section, then
\begin{equation}\label{eq:welladjustedness}
\norm{\psi_{t,p}(s)}_{\Gamma^k(\overline{U}_p;E_0)} = \norm{s}_{\Gamma^k(\overline{U}_p;E_t)}.
\end{equation}
We will use this compatibility between the homomorphisms and seminorms in our proof of \Cref{prop:spectralgapconvergence}.
\begin{proof}[Proof of \Cref{prop:spectralgapconvergence}]
Let the adjusted charts $(\widetilde{U}_p, V_p)$, associated homomorphisms $\psi_{t,p} \colon E_t\vert_{\widetilde{U}_p} \to E_0\vert_{\widetilde{U}_p}$ and choice of precompact opens $U_p\subset \widetilde{U}_p$ be as above.

Let us denote $\lambda = \liminf_{t\to 0} \lambda_1(\mathcal{J}_{t})$. There exists a sequence $(t_u)_{u\in \N} \subset [0,\epsilon)$ such that $t_u \to 0$ as $u\to\infty$ and
\[
\lim_{u\to\infty} \lambda_1(\mathcal{J}_{t_u}) = \lambda =  \liminf_{t\to 0} \lambda_1(\mathcal{J}_{t}).
\]
It follows from \Cref{eq:spectraltheoryjacobioperators} that for each $u\in \N$ there exists a smooth eigensection $s_u \in \Gamma^\infty(E_t)$ such that $\mathcal{J}_{t_u} s_u = \lambda_1(\mathcal{J}_{t_u}) \cdot s_u$. We normalise such that $\norm{s_u}_{\Gamma^0(E_t)} = 1$ for all $u\in \N$.

For $p = 1,\hdots, r$ we denote $\sigma_{u,p} = \psi_{t_u,p}(s_u\vert_{\widetilde{U}_p}) \in \Gamma^\infty(\widetilde{U}_p;E_0)$. Our proof will rely on the following two lemmas.

\begin{lemma}\label{lem:convergence}
There exists a subsequence $(u_k)_{k\in \N}\subset \N$ such that for each $p = 1,\hdots, r$ the sequence $(\sigma_{u_k,p})_{k\in\N}$ converges in $\Gamma^2(\overline{U}_p;E_0)$ to a limiting section $\sigma_p \in \Gamma^2(\overline{U}_p;E_0)$. At least one of these limiting sections is not the zero section. Moreover, for all $p,q = 1,\hdots, r$ the sections $\sigma_p$ and $\sigma_q$ coincide on $\overline{U}_p \cap \overline{U}_q$.
\end{lemma}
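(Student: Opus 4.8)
The plan is to extract the subsequence from uniform interior elliptic estimates for the eigensection equation $\mathcal{J}_{t_u} s_u = \lambda_1(\mathcal{J}_{t_u}) s_u$ written out in the adapted charts, combined with an Arzel\`a--Ascoli argument and the compatibility \eqref{eq:welladjustedness}. First I would fix, after passing to a subsequence, an index $p_0$ for which $\norm{s_u}_{\Gamma^0(\overline{U}_{p_0}; E_{t_u})} = 1$ for every $u$; this is possible because $\norm{\cdot}_{\Gamma^0(E_{t_u})}$ is the maximum of the finitely many seminorms $\norm{\cdot}_{\Gamma^0(\overline{U}_p;\cdot)}$, so for each $u$ the maximum is attained at some index, and by pigeonhole one index works along a subsequence. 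Next, in each adapted chart $\widetilde{U}_p$ and in the frame $(E_\alpha(\cdot,t))$, the eigensection equation for $s_u = s_u^\alpha E_\alpha(\cdot,t_u)$ becomes a second order linear elliptic system $L_{u,p}(s_u^\alpha) = \lambda_1(\mathcal{J}_{t_u})\, s_u^\alpha$ whose principal part is $-g^{ij}\partial_i\partial_j$ (times the identity) and whose lower order coefficients are polynomial expressions built from the fixed metric $g$, the Christoffel symbols and curvature tensor of $N$ along $f_{t_u}$, and the first two derivatives of $f_{t_u}$. Since $t\mapsto f_t$ is continuous into $C^3(M,N)$ and the $t_u$ accumulate at $0$, these coefficients are uniformly bounded in $C^{0,\alpha}(\overline{U}_p)$ and the systems are uniformly elliptic near $\overline{U}_p$, uniformly in $u$; likewise $\lambda_1(\mathcal{J}_{t_u})$ is bounded since it converges to $\lambda$.

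With the normalisation $\norm{s_u}_{\Gamma^0(E_{t_u})} = 1$ the right-hand sides are bounded in $C^0$, so interior $L^p$-estimates give a uniform local $W^{2,p}$-bound for every $p<\infty$, hence by Sobolev embedding a uniform local $C^{1,\alpha}$-bound on the components $s_u^\alpha$; feeding this back in, the right-hand sides become bounded in $C^{0,\alpha}$, and interior Schauder estimates then yield $\norm{s_u}_{\Gamma^{2,\alpha}(\overline{U}_p;E_{t_u})} \le C_p$ with $C_p$ independent of $u$ (shrinking $\widetilde{U}_p$ slightly around $\overline{U}_p$ as needed). This is exactly as much regularity as the $C^3$-hypothesis on $f_t$ allows, and it is enough. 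By \eqref{eq:welladjustedness} — which holds verbatim for H\"older seminorms because $\psi_{t,p}$ is the identity in the frames $(E_\alpha(\cdot,t))$ and $(E_\alpha(\cdot,0))$ — the pushed-forward sections satisfy $\norm{\sigma_{u,p}}_{\Gamma^{2,\alpha}(\overline{U}_p;E_0)} \le C_p$ as well. Hence $(\sigma_{u,p})_u$ is precompact in $\Gamma^2(\overline{U}_p;E_0)$, and extracting successively for $p = 1,\dots,r$ produces one subsequence $(u_k)$ with $\sigma_{u_k,p}\to\sigma_p$ in $\Gamma^2(\overline{U}_p;E_0)$ for every $p$. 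By the choice of $p_0$ and \eqref{eq:welladjustedness}, $\norm{\sigma_{u_k,p_0}}_{\Gamma^0(\overline{U}_{p_0};E_0)} = 1$ for all $k$, so $\norm{\sigma_{p_0}}_{\Gamma^0(\overline{U}_{p_0};E_0)} = 1$ and in particular $\sigma_{p_0}\neq 0$.

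For the overlap compatibility, on $\widetilde{U}_p\cap\widetilde{U}_q$ we have $\sigma_{u,p} = (\psi_{t_u,p}\circ\psi_{t_u,q}^{-1})(\sigma_{u,q})$, since both $\sigma_{u,p}$ and $\sigma_{u,q}$ are images of the single section $s_u$. The bundle automorphism $\psi_{t,p}\circ\psi_{t,q}^{-1}$ of $E_0$ over the overlap equals the identity at $t=0$ (because $\psi_{0,p}=\psi_{0,q}=\id$) and, expressed in the two coordinate frames of $V_p$ and $V_q$, it is given by the Jacobian of the coordinate change on $N$ evaluated along $F(\cdot,t)$; since $F$ is continuous this converges uniformly on $\overline{U}_p\cap\overline{U}_q$ to the value at $t=0$, so $\psi_{t_u,p}\circ\psi_{t_u,q}^{-1}\to\id$ uniformly there. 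Combining this with $\sigma_{u_k,q}\to\sigma_q$ in $\Gamma^0$ gives $\sigma_p = \lim_k \sigma_{u_k,p} = \lim_k (\psi_{t_{u_k},p}\circ\psi_{t_{u_k},q}^{-1})(\sigma_{u_k,q}) = \sigma_q$ on $\overline{U}_p\cap\overline{U}_q$.

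The main obstacle is the uniformity in $u$ of the elliptic estimates: one must verify that the coefficients of the operators $\mathcal{J}_{t_u}$, once written in the fixed adapted coordinates, are controlled uniformly in the relevant H\"older norm and that ellipticity does not degenerate. Both reduce to the hypothesis that $t\mapsto f_t$ is continuous into $C^3(M,N)$ — so that the coefficients, which involve $f_t$ together with its first and second derivatives, converge in $C^{0,\alpha}$ — together with compactness of the parameter interval. Everything else (the $L^p$-to-Schauder bootstrap, the iterated Arzel\`a--Ascoli extraction over the finitely many charts, and the overlap gluing) is routine.
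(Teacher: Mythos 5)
Your proof is correct and follows essentially the same strategy as the paper: write the eigensection equations in the adapted charts, use the continuous dependence of the coefficients (Lemma 3.4) to obtain uniform interior elliptic estimates and hence uniform $C^{2,\alpha}$ bounds on the components, extract a subsequence by Arzel\`a--Ascoli, use the normalisation $\norm{s_u}_{\Gamma^0(E_{t_u})}=1$ together with \eqref{eq:welladjustedness} to rule out a trivial limit, and glue the $\sigma_p$ on overlaps via the uniform convergence $\psi_{t,p}\circ\psi_{t,q}^{-1}\to\id$. The only cosmetic differences are that you reach the $C^{2,\alpha}$ bound via an $L^p$/Sobolev-to-Schauder bootstrap whereas the paper invokes Morrey's Schauder estimate for elliptic systems directly, and you establish nonvanishing by fixing a chart $p_0$ realising the sup norm (by pigeonhole along a subsequence) rather than by the paper's equivalent contradiction argument.
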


In the second lemma we consider the operator $\mathcal{J}_0$ restricted to the open sets $U_p$. Since the Jacobi operators $\mathcal{J}_t$ are ordinary differential operators, it follows that the value of $\mathcal{J}_t s$ at a point in $M$ depends only on the germ of the section $s$ at that point. Hence, we can apply $\mathcal{J}_t$ also to sections that are not globally defined.

\begin{lemma}\label{lem:limitiseigenvector}
Consider the limiting sections $\sigma_p \in \Gamma^2(\overline{U}_p;E_0)$ as defined in \Cref{lem:convergence}. For all $p=1,\hdots, r$ we have on $U_p$ that
\[
\mathcal{J}_{0} \sigma_p = \lambda \cdot \sigma_p.
\]
\end{lemma}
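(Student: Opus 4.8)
The plan is to work chart by chart and reduce the statement to a convergence result for systems of scalar differential operators on the fixed set $\overline{U}_p$. Fix $p\in\{1,\dots,r\}$ and use the coordinates $(x^i)$ on $\widetilde{U}_p$ together with the frame $(E_\alpha(\cdot,t))$ of $E_t|_{\widetilde{U}_p}$. Writing a section $s$ of $E_t$ over $\widetilde{U}_p$ as $s=s^\alpha E_\alpha(\cdot,t)$, the operator $\mathcal{J}_t$ takes a local form $(\mathcal{J}_t s)^\gamma=\mathcal{L}_t\big[(s^\alpha)_\alpha\big]^\gamma$, where $\mathcal{L}_t$ is a second order linear differential operator on $\R^n$-valued functions on $\widetilde{U}_p$ of the shape
\[
\mathcal{L}_t[v]^\gamma=-g^{ij}\,\partial_i\partial_j v^\gamma+b_t{}^{\gamma}{}_{\alpha}{}^{i}\,\partial_i v^\alpha+c_t{}^{\gamma}{}_{\alpha}\,v^\alpha,
\]
whose leading coefficient is independent of $t$ and whose lower order coefficients $b_t,c_t$ are continuous functions on $\widetilde{U}_p$. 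Since, by construction, $\psi_{t,p}$ sends a section with components $(s^\alpha)$ in the frame $(E_\alpha(\cdot,t))$ to the section with the same components in the frame $(E_\alpha(\cdot,0))$, the eigenvalue equation $\mathcal{J}_{t_u}s_u=\lambda_1(\mathcal{J}_{t_u})\,s_u$, restricted to $\widetilde{U}_p$, translates into the identity of $\R^n$-valued functions
\[
\mathcal{L}_{t_u}\big[(\sigma_{u,p}^{\alpha})_\alpha\big]=\lambda_1(\mathcal{J}_{t_u})\,(\sigma_{u,p}^{\alpha})_\alpha ,
\]
where $(\sigma_{u,p}^\alpha)$ are the components of $\sigma_{u,p}$ in the frame $(E_\alpha(\cdot,0))$; likewise $\mathcal{J}_0\sigma_p$ has components $\mathcal{L}_0\big[(\sigma_p^\alpha)_\alpha\big]$. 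Hence it suffices to prove $\mathcal{L}_0\big[(\sigma_p^\alpha)_\alpha\big]=\lambda\,(\sigma_p^\alpha)_\alpha$ on $U_p$.

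The crucial step is that $\mathcal{L}_t\to\mathcal{L}_0$ uniformly on $\overline{U}_p$ as $t\to0$, meaning $b_t\to b_0$ and $c_t\to c_0$ uniformly on $\overline{U}_p$. This follows from the standard local formula for the Jacobi operator: writing out the pullback connection, its iterated covariant derivative and the curvature term in the chart $V_p$, one sees that $b_t$ and $c_t$ are universal smooth expressions in the (fixed) components $g^{ij}$ and Christoffel symbols of $g$ on $\widetilde{U}_p$, in the Christoffel symbols of $h$ and the components of $R^N$ in the chart $V_p$ evaluated along $f_t$, and in the first and second partial derivatives $\partial_i f_t^\alpha$ and $\partial_i\partial_j f_t^\alpha$ on $\widetilde{U}_p$. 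Because $t\mapsto f_t$ is continuous into $C^3(M,N)$ — in particular into $C^2(M,N)$ — all of these converge uniformly on the compact set $\overline{U}_p$, and therefore so do $b_t$ and $c_t$. I expect this verification to be the main obstacle: it is conceptually routine, but one has to write $\mathcal{J}_t$ out in coordinates carefully enough to confirm that every coefficient depends continuously (indeed smoothly) on the $2$-jet of $f_t$, while keeping track of the fact that transporting by $\psi_{t,p}$ leaves the coordinate form of the operator unchanged.

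It remains to pass to the limit along the subsequence $(u_k)$ furnished by \Cref{lem:convergence}, for which $\sigma_{u_k,p}\to\sigma_p$ in $\Gamma^2(\overline{U}_p;E_0)$ (so $(\sigma_{u_k,p}^\alpha)\to(\sigma_p^\alpha)$ in $C^2(\overline{U}_p)$) and $\lambda_1(\mathcal{J}_{t_{u_k}})\to\lambda$. Decompose
\[
\mathcal{L}_{t_{u_k}}\big[(\sigma_{u_k,p}^\alpha)\big]-\mathcal{L}_0\big[(\sigma_p^\alpha)\big]=\big(\mathcal{L}_{t_{u_k}}-\mathcal{L}_0\big)\big[(\sigma_{u_k,p}^\alpha)\big]+\mathcal{L}_0\big[(\sigma_{u_k,p}^\alpha)-(\sigma_p^\alpha)\big].
\]
Since $\mathcal{L}_{t_{u_k}}-\mathcal{L}_0$ is of order at most one, the first term has sup-norm on $\overline{U}_p$ bounded by a constant times $\big(\|b_{t_{u_k}}-b_0\|_{\infty,\overline{U}_p}+\|c_{t_{u_k}}-c_0\|_{\infty,\overline{U}_p}\big)\cdot\|\sigma_{u_k,p}\|_{\Gamma^2(\overline{U}_p;E_0)}$, where the first factor tends to $0$ by the previous paragraph and the second is bounded since $\sigma_{u_k,p}$ converges in $\Gamma^2(\overline{U}_p;E_0)$. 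The second term tends to $0$ uniformly on $\overline{U}_p$ because $\mathcal{L}_0$ has bounded coefficients on $\overline{U}_p$ and $(\sigma_{u_k,p}^\alpha)\to(\sigma_p^\alpha)$ in $C^2(\overline{U}_p)$. Hence $\mathcal{L}_{t_{u_k}}\big[(\sigma_{u_k,p}^\alpha)\big]\to\mathcal{L}_0\big[(\sigma_p^\alpha)\big]$ uniformly on $\overline{U}_p$. On the other hand, by the eigenvalue equation, $\mathcal{L}_{t_{u_k}}\big[(\sigma_{u_k,p}^\alpha)\big]=\lambda_1(\mathcal{J}_{t_{u_k}})\,(\sigma_{u_k,p}^\alpha)\to\lambda\,(\sigma_p^\alpha)$ uniformly on $\overline{U}_p$. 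Comparing the two limits yields $\mathcal{L}_0\big[(\sigma_p^\alpha)\big]=\lambda\,(\sigma_p^\alpha)$ on $\overline{U}_p$, that is, $\mathcal{J}_0\sigma_p=\lambda\,\sigma_p$ on $U_p$.
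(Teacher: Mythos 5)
Your proof is correct and follows essentially the same route as the paper: both hinge on the continuous dependence of the local coefficient functions of $\mathcal{J}_t$ on $t$ (the content of \Cref{lem:continuousdependencecoeffs}), combined with the $\Gamma^2$-convergence $\sigma_{u_k,p}\to\sigma_p$ from \Cref{lem:convergence} and the fact that $\psi_{t,p}$ preserves coordinate components. The paper packages this as an operator-norm statement, $\norm{\psi_{t,p}\circ\mathcal{J}_t-\mathcal{J}_0\circ\psi_{t,p}}_{\mathrm{op}}\to 0$, while you work directly with the coefficient differences and additionally note the (correct, mildly simplifying) fact that the principal part $-g^{ij}\partial_i\partial_j$ is $t$-independent; this is a cosmetic difference only.
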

We postpone the proof of these two lemmas and first finish proof of \Cref{prop:spectralgapconvergence}.

It follows from the last statement of \Cref{lem:convergence} that we can patch the limiting sections $\sigma_p$ together to obtain a well-defined global limiting section $\sigma\in \Gamma^2(E_0)$. More precisely, we let $\sigma \in \Gamma^2(E_0)$ be the section that on each $\overline{U}_p\subset M$ is given by $\sigma\vert_{\overline{U}_p} = \sigma_p$. Note that the sets $\overline{U}_p$ cover $M$ and that by \Cref{lem:convergence} the section is well-defined on intersections $\overline{U}_p \cap \overline{U}_q$. Because at least one of the limiting sections $\sigma_p$ does not vanish, it follows that $\sigma$ is not the zero section.

Now \Cref{lem:limitiseigenvector} implies that $\sigma$ is an eigensection of $\mathcal{J}_0$. Namely, we have 
\[
\mathcal{J}_0 \sigma = \lambda \cdot \sigma
\]
because this holds on each subset $U_p \subset M$. It follows that $\lambda$ is an eigenvalue of $\mathcal{J}_0$ and hence that
\[
\lambda_1(\mathcal{J}_0) \leq \lambda = \liminf_{t\to 0} \lambda_1(\mathcal{J}_{t}).
\]
\end{proof}

We now prove \Cref{lem:convergence} and \Cref{lem:limitiseigenvector}. The proofs of these lemmas will rely on the fact that, in suitably chosen local coordinates, the coefficients of the differential operators $\mathcal{J}_t$ depend continuously on $t$.

Let us first introduce the necessary notation. Let $(\widetilde{U}_p, V_p)$ be a pair of adapted charts as before, $(x^i)_{i=1}^m$ the coordinates on $\widetilde{U}_p$ and $(y^\alpha)_{\alpha=1}^n$ the coordinates on $V_p$. We put again $E_\alpha = F^*\parder{}{y^\alpha}$. The Jacobi operators $\mathcal{J}_t$ are second order differential operators. Hence, in local coordinates they can be written as
\begin{equation}\label{eq:generaldiffop}
\mathcal{J}_{t} s(x) = \left\lbrace A_{\alpha}^{ij,\gamma}(x,t) \parder{^2 s^\alpha}{x^i x^j}(x) + B_{\alpha}^{i, \gamma}(x,t) \parder{s^\alpha}{x^i} (x) + C^\gamma_\alpha(x,t) s^\alpha(x) \right\rbrace E_\gamma(x,t),
\end{equation}
where $A_{\alpha}^{ij,\gamma}, B_{\alpha}^{i, \gamma}, C^\gamma_\alpha \colon \widetilde{U}_p \times [0,\epsilon) \to \R$ are suitable coefficient functions. Here we write any section $s$ of $E_t$ over $\widetilde{U}_p$ as $s=s^\alpha E_\alpha(\cdot, t)$.

Our proofs of \Cref{lem:convergence} and \Cref{lem:limitiseigenvector} are based on the following observation.

\begin{lemma}\label{lem:continuousdependencecoeffs}
Let $U'\subset \widetilde{U}_p$ be a precompact open subset. For all $i,j=1,\hdots,m$ and $\alpha, \gamma = 1,\hdots, n$, we have that the maps $t\mapsto A_\alpha^{ij,\gamma}(\cdot, t), t\mapsto B_\alpha^{i,\gamma}(\cdot, t)$ and $t\mapsto C_\alpha^\gamma(\cdot, t)$ are continuous mappings from $[0,1]$ into $C^1(\overline{U'})$.
\end{lemma}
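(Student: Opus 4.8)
The plan is to expand $\mathcal{J}_t$ in the coordinates $(x^i)_{i=1}^m$ on $\widetilde{U}_p$ and the frame $(E_\alpha)_{\alpha=1}^n$ on $V_p$ and to record the structural form of the resulting coefficients. Performing the traces with the inverse metric $g^{ij}$ in place of an orthonormal frame, and using that the pullback connection on $F^*TN$ has connection form $\omega^\gamma_\alpha(\partial/\partial x^i) = (\partial F^\beta/\partial x^i)\,(\Gamma^N)^\gamma_{\beta\alpha}(F)$, one checks that $A_\alpha^{ij,\gamma}$, $B_\alpha^{i,\gamma}$ and $C_\alpha^\gamma$ are each a finite sum of finite products of elementary ingredients of three kinds: (i) smooth, $t$-independent functions built from $(M,g)$, namely $g^{ij}$ and the Christoffel symbols $(\Gamma^M)^k_{ij}$; (ii) the partial derivatives $\partial^{\abs{\mu}}F^\beta/\partial x^\mu$ for $\abs{\mu}\le 2$; and (iii) the compositions with $F(\cdot,t)$ of smooth functions on $V_p$, namely the Christoffel symbols $(\Gamma^N)^\gamma_{\alpha\beta}$ of $(N,h)$, their first derivatives $\partial_{y^\delta}(\Gamma^N)^\gamma_{\alpha\beta}$, and the components of the curvature tensor $R^N$. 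Concretely, $A$ depends only on $g^{ij}$; $B$ is polynomial in the functions of type (i), in $\partial F^\beta/\partial x^i$, and in $(\Gamma^N)^\gamma_{\alpha\beta}\circ F$; and $C$ involves in addition the second derivatives $\partial^2 F^\beta/\partial x^i\partial x^j$ (which appear when $\omega$ is differentiated inside the $\nabla^2 s$ term), the functions $(\partial_{y^\delta}(\Gamma^N)^\gamma_{\alpha\beta})\circ F$, and $R^N\circ F$. This second-order dependence of the coefficients on $F$ is exactly why \Cref{prop:spectralgapconvergence} asks for $C^3$ rather than $C^2$ continuity of $t\mapsto f_t$: to get $C\in C^1(\overline{U'})$ one needs $\partial^2 F^\beta/\partial x^i\partial x^j\in C^1(\overline{U'})$.

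With this in hand it suffices to prove that each elementary ingredient, viewed as a function of $t$, is a continuous map into $C^1(\overline{U'})$: the conclusion then follows because $C^1(\overline{U'})$ is a Banach algebra under pointwise multiplication (so multiplication $C^1\times C^1\to C^1$ is continuous) and addition is continuous, whence finite sums of finite products of continuous $C^1(\overline{U'})$-valued maps are again continuous. The type-(i) ingredients are constant in $t$, so there is nothing to show. For type (ii): by hypothesis $t\mapsto f_t$ is continuous into $C^3(M,N)$, and since $F$ maps $\widetilde{U}_p\times[0,\epsilon)$ into the single chart $V_p$, this means (over $\overline{U'}$) that $t\mapsto (F^1(\cdot,t),\dots,F^n(\cdot,t))$ is continuous into $C^3(\overline{U'};\R^n)$; hence $t\mapsto \partial^{\abs{\mu}}F^\beta/\partial x^\mu$ is continuous into $C^1(\overline{U'})$ for every $\abs{\mu}\le 2$.

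The only substantial point is continuity of the type-(iii) ingredients, i.e.\ of maps $t\mapsto \Psi\circ F(\cdot,t)$ with $\Psi\in C^\infty(V_p)$. I would deduce this from a standard composition lemma (the $\Omega$-lemma of the calculus of maps): for a precompact open $U'\subset\widetilde{U}_p$, the map $u\mapsto \Psi\circ u$ is continuous from $C^1(\overline{U'};V_p)$ to $C^1(\overline{U'})$; combined with the continuity of $t\mapsto F(\cdot,t)$ into $C^1(\overline{U'};V_p)$ — itself a consequence of the $C^3$, a fortiori $C^1$, continuity of $t\mapsto f_t$ — this handles each of the finitely many $\Psi$ that occur. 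If one prefers, the same estimate is easily done by hand: from $\partial_i(\Psi\circ F) = (\partial_\delta\Psi\circ F)\,\partial_i F^\delta$ and the mean value theorem, the $C^1(\overline{U'})$-distance between $\Psi\circ F(\cdot,t)$ and $\Psi\circ F(\cdot,t_0)$ is bounded by the $C^1(\overline{U'})$-distance between $F(\cdot,t)$ and $F(\cdot,t_0)$ times a constant depending only on sup-bounds for $\Psi$ and its first two derivatives on a fixed compact neighbourhood of $F(\overline{U'}\times[0,\epsilon))$ in $V_p$, and this tends to $0$ as $t\to t_0$ by the $C^1$-continuity of $t\mapsto F(\cdot,t)$. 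Thus the entire lemma reduces to this composition estimate; everything else is bookkeeping with the coordinate expansion of $\mathcal{J}_t$. The main obstacle, accordingly, is organising that expansion cleanly enough to exhibit the three-fold structure of the coefficients — the analytic content is just the elementary composition estimate.
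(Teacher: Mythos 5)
Your proof is correct and follows essentially the same route as the paper: expand $\mathcal{J}_t$ in adapted coordinates, express the pullback connection via $\Gamma^N\circ F$ and $\partial_i F^\beta$, and read off that the coefficients are built from $g^{ij}$, $\Gamma^M$, $\partial^\mu F^\beta$ ($\abs{\mu}\le 2$), and compositions $\Psi\circ F$ for finitely many smooth $\Psi$ on $V_p$. The one thing you do that the paper does not is actually justify the final step — the paper simply asserts that continuity of the coefficients ``follows immediately'' from $C^3$-continuity of $t\mapsto f_t$, whereas you isolate the genuine analytic content (the $\Omega$-lemma / composition estimate showing $u\mapsto\Psi\circ u$ is continuous $C^1(\overline{U'};V_p)\to C^1(\overline{U'})$) and note the Banach-algebra structure of $C^1(\overline{U'})$ needed to pass from ingredients to polynomial expressions in them. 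This is a welcome clarification rather than a deviation; your accounting of which coefficients involve which derivatives of $F$ (in particular that $C^\gamma_\alpha$ carries the $\partial^2 F$ terms, explaining the $C^3$ hypothesis) also matches the paper's observation.
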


\begin{proof}
Denote by $g^{ij}$ the coefficients of the inverse of the metric tensor $g$ with respect to the coordinates $(x^i)_{i=1}^m$ and by $\presuperscript{M}{\Gamma}^{k}_{ij}$ the Christoffel symbols of the Levi-Civita connection of $(M,g)$. The Jacobi operators are expressed locally as
\begin{align*}
\mathcal{J}_t s &= \Delta s - \tr_g R^N(s, df\cdot) df\cdot\\
&= -g^{ij}\left\{ \nabla_{\parder{}{x^i}} \nabla_{\parder{}{x^j}} s - \presuperscript{M}{\Gamma}^{k}_{ij} \nabla_{\parder{}{x^k}} s + R^N\left(s, \parder{f}{x^i}\right)\parder{f}{x^j}\right\},
\end{align*}
with $s \in \Gamma^2(\overline{U}_p; E_t)$. Recall that $\nabla$ is the pullback connection on the bundle $E_t = f_t^*TN$. Let us denote by  $\presuperscript{N}{\Gamma}^{\gamma}_{\alpha \beta}$ the Christoffel symbols of the Levi-Civita connection of $(N,h)$ on the chart $V_p$. Then, for any $s = s^\alpha E_\alpha(\cdot, t) \in \Gamma^1(\widetilde{U}_p; E_t)$, we can write the pullback connection as
\[
\nabla_{\parder{}{x^i}} s(x) = \parder{s^\alpha}{x^i}(x) E_\alpha(x, t) + s^\alpha(x) \parder{f_t^\beta}{x^i}(x)\cdot \presuperscript{N}{\Gamma}^{\gamma}_{\alpha \beta}(f_t(x)) \cdot E_\gamma(x,t).
\]
The coefficient functions $A_\alpha^{ij,\gamma}, B_\alpha^{i,\gamma}, C_\alpha^\gamma$ can be calculated by filling in this expression for the connection $\nabla$ into the local expression for the Jacobi operators. It follows that these functions can be expressed entirely in terms of the quantities 
\[
g^{ij}, \parder{f_t^\beta}{x^i}, \presuperscript{M}{\Gamma}^{k}_{ij}, (R^N)_{\alpha\beta\gamma}^\delta \circ f_t \text{ and }\presuperscript{N}{\Gamma}^{\gamma}_{\alpha \beta}\circ f_t
\]
and their first derivatives. Here $(R^N)_{\alpha\beta\gamma}^\delta$ denote the coefficients of the Riemann curvature tensor $R^N$ in the coordinates on $V_p$. As a result, in the expression for the coefficient functions only spatial derivatives of the functions $f_t$ up to second order appear. The statement of the lemma now follows immediately from our assumption that $[0,1]\to C^3(M,N) \colon t\mapsto f_t$ is a continuous mapping.
\end{proof}

We can now prove \Cref{lem:convergence}.

\begin{proof}[Proof of \Cref{lem:convergence}]
Fix a $p\in \{1,\hdots, r\}$. Let us write $s_u = s_u^\alpha E_\alpha(\cdot, t)$ on $\widetilde{U}_p$. Because each $s_u$ is an eigensection of the Jacobi operator $\mathcal{J}_{t_u}$, we find that they satisfy
\begin{equation}\label{eq:pdesystem}
\left[ \mathcal{J}_{t_u} - \lambda_1(\mathcal{J}_{t_u})\right] s_u = 0.
\end{equation}
Hence, on $\widetilde{U}_p$ the coefficients $(s_u^\alpha)_{\alpha=1}^n$ satisfy a second order linear elliptic system of differential equations. We will use Schauder estimates to obtain a uniform bound on the $C^{2,\mu}$-H\"older norm of these coefficients. To this end we will apply the results of \cite{Morrey}.

The system of differential equations in \Cref{eq:pdesystem} is elliptic because the Jacobi operators are elliptic differential operators. The bounds on the H\"older norms of solutions to this equation that are provided by Morrey's results depend on a uniform ellipticity constant which in Morrey's paper is denoted $M$ (defined in \cite[Equation 1.6]{Morrey}). This constant depends only on the coefficients of the second order part of the system in \Cref{eq:pdesystem}. That is, it depends only on the coefficients $A_{\alpha}^{ij,\gamma}$. Because, by \Cref{lem:continuousdependencecoeffs}, these coefficient functions depend continuously on $t$, it follows that the constant $M$ can be taken uniformly over $u\in \N$.

Take a precompact open $U'\subset \widetilde{U}_p$ such that $\overline{U}_p \subset U'\subset \overline{U'} \subset \widetilde{U}_p$. The coefficients of the system of differential equations in \Cref{eq:pdesystem} are a combination of the coefficients of $\mathcal{J}_{t_u}$ and the constant term $\lambda_1(\mathcal{J}_{t_u})$. It follows from \Cref{lem:continuousdependencecoeffs} that the $C^{0,\mu}$-H\"older norms (even $C^1$ norms) of the coefficients of $\mathcal{J}_{t_u}$ can be bounded uniformly in $u$. The constant term $\lambda_1(\mathcal{J}_{t_u})$ can also be bounded uniformly in $u$, since the sequence $(\lambda_1(\mathcal{J}_{t_u}))_{u\in \N}$ is convergent. So the coefficients of the system of differential equations in \Cref{eq:pdesystem} have uniformly (in $u$) bounded $C^{0,\mu}$-H\"older norms. Moreover, because we normalised the sections $s_u$ such that $\norm{s_u}_{\Gamma^0(E_t)} = 1$, it follows that the $C^0$ norm (and hence also the $L^2$ norm) of the coefficients $s^\alpha_u$ is also bounded uniformly in $u$. We now apply \cite[Theorem 4.7]{Morrey} (with $G=U', G_1 = U_p$, in the notation of that paper) to conclude that on $\overline{U}_p$ the $C^{2,\mu}$-H\"older norms of the coefficients $s_u^\alpha$ are uniformly bounded in $u$. 

We recall the notation $\sigma_{u,p} = \psi_{t_u,p}(s_u\vert_{\widetilde{U}_p})$. It follows from the definition of the homomorphisms $\psi_{t,p}$ that $s_u$ and $\sigma_{u,p}$ have the same coefficients on $\widetilde{U}_p$. Namely, if we write $\sigma_{u,p} = \sigma_{u,p}^\alpha E_\alpha(\cdot, 0)$, then $s_{u}^\alpha = \sigma_{u,p}^\alpha$ for $\alpha = 1,\hdots, n$. Hence, also the $C^{2,\mu}$-H\"older norms of the coefficients $\sigma_{u,p}^\alpha$ are uniformly bounded. It now follows from the Arzel\`a-Ascoli theorem that there exists a subsequence of $(\sigma_{u,p})_{u\in \N}$ that converges in $\Gamma^2(\overline{U}_p; E_0)$ to a limiting section. We denote this limiting section by $\sigma_p$. By choosing subsequent refinements of the subsequence we can arrange for this to hold for each $p=1,\hdots, r$. We denote the indices of this subsequence by $(u_k)_{k\in \N} \subset \N$.

We now prove that is it not possible that all limiting sections $\sigma_p$ vanish identically. If this was the case, and all sections $\sigma_p$ vanish, then this would imply $\norm{\sigma_{u_k,p}}_{\Gamma^0(\overline{U}_p;E_0)} \to 0$ as $k\to\infty$ for all $p=1,\hdots, r$. However, this contracts that for all $u\in \N$ we have, by \Cref{eq:welladjustedness}, that
\[
\max_{p=1,\hdots, r} \norm{\sigma_{u,p}}_{\Gamma^0(\overline{U}_p;E_0)} = \max_{p=1,\hdots, r} \norm{s_u}_{\Gamma^0(\overline{U}_p;E_t)} = \norm{s_u}_{\Gamma^0(E_t)} = 1.
\]

Finally, we prove the last statement of the lemma. Let $(\widetilde{U}_p, V_p)$ and $(\widetilde{U}_q, V_q)$ be two pairs of adapted charts with corresponding local homomorphisms $\psi_{t,p}$ and $\psi_{t,q}$. Recall that the maps  $\psi_{t,p} \colon E_t\vert_{\widetilde{U}_p} \to E_0\vert_{\widetilde{U}_p}$ are isomorphisms for $t$ small enough. It can be easily seen from the definition of these homomorphisms that, on the compact set $\overline{U}_p\cap \overline{U}_q$, the maps
\[
\psi_{t,q}\circ \psi_{t,p}^{-1} \colon E_0\vert_{\overline{U}_p\cap \overline{U}_q} \to E_0\vert_{\overline{U}_p\cap \overline{U}_q}
\]
converge uniformly to the identity map as $t\to 0$. It follows that
\begin{align*}
\sigma_p\vert_{\overline{U}_p \cap \overline{U}_q} &= \lim_{k\to\infty} \psi_{t_{u_k},p}(s_{u_k}\vert_{\overline{U}_p \cap \overline{U}_q})\\
&=\lim_{k\to\infty} \psi_{t_{u_k},q}\circ \psi_{t_{u_k},p}^{-1} \circ \psi_{t_{u_k},p}(s_{u_k}\vert_{\overline{U}_p \cap \overline{U}_q})\\ &= \lim_{k\to\infty} \psi_{t_{u_k},q}(s_{u_k}\vert_{\overline{U}_p \cap \overline{U}_q})\\
&= \sigma_q\vert_{\overline{U}_p \cap \overline{U}_q},
\end{align*}
where the limits are taken in $\Gamma^0(\overline{U}_p\cap \overline{U}_q;E_0)$.
\end{proof}

We finish this section with the proof of \Cref{lem:limitiseigenvector}.
\begin{proof}[Proof of \Cref{lem:limitiseigenvector}]
Fix a $p\in \{1,\hdots, p\}$. Let $(\widetilde{U}_p,V_p)$ be a pair of adapted charts and let the homomorphisms $\psi_{t,p}$ and the frame $(E_\alpha)_{\alpha=1}^n$ be as before.

We claim that 
\begin{equation}\label{eq:operatornormconverges}
\norm{\psi_{t,p}\circ \mathcal{J}_{t} - \mathcal{J}_0 \circ \psi_{t,p}}_{\text{op}} \to 0 \text{ as } t\to 0.
\end{equation}
Here, $\norm{\cdot}_{\text{op}}$ is the operator norm on the space of bounded linear operators from $\Gamma^2(\overline{U}_p; E_t)$ to $\Gamma^0(\overline{U}_p; E_0)$ (equipped with the norms $\norm{\cdot}_{\Gamma^2(\overline{U}_p; E_t)}$ and $\norm{\cdot}_{\Gamma^0(\overline{U}_p; E_0)}$ respectively).

We denote
\begin{align*}
a_{\alpha}^{ij,\gamma}(x,t) &= A^{ij,\gamma}_{\alpha}(x,t) - A^{ij,\gamma}_{\alpha}(x,0)\\
b_{\alpha}^{i,\gamma}(x,t) &= B^{i,\gamma}_{\alpha}(x,t) - B^{i,\gamma}_{\alpha}(x,0)\\
c_{\alpha}^{\gamma}(x,t) &= C^{\gamma}_{\alpha}(x,t) - C^{\gamma}_{\alpha}(x,0).
\end{align*}
Then, for a section $s = s^\alpha E_\alpha(\cdot, t) \in \Gamma^2(\overline{U}_p; E_t)$, we have
\begin{align*}
[ \psi_{t,p}\circ \mathcal{J}_{t} &- \mathcal{J}_0 \circ \psi_{t,p}]s(x) \\&= \left\{a_{\alpha}^{ij,\gamma}(x,t) \parder{^2 s^\alpha}{x^i x^j}(x) + b_\alpha^{i,\gamma}(x,t) \parder{s^\alpha}{x^i}(x) + c_\alpha^\gamma(x,t) s^\alpha(x)\right\} E_\alpha(x,0).
\end{align*}
From this expression follows that
\[
\norm{\psi_{t,p}\circ \mathcal{J}_{t} - \mathcal{J}_0 \circ \psi_{t,p}}_{\text{op}} \leq \sum_{i,j,\alpha,\gamma} \norm{a_{\alpha}^{ij,\gamma}}_{C^0(\overline{U}_p)} + \sum_{i,\alpha,\gamma} \norm{b_{\alpha}^{i,\gamma}}_{C^0(\overline{U}_p)} + \sum_{\alpha,\gamma} \norm{c_{\alpha}^{\gamma}}_{C^0(\overline{U}_p)}.
\]
Our claim now immediately implied by the results of \Cref{lem:continuousdependencecoeffs}.

We use the notation $(u_k)_{k\in \N}$ and $\sigma_{u,p}$ as in \Cref{lem:convergence}. From that lemma follows that $\sigma_{u_k, p} \to \sigma_p$ in $\Gamma^2(\overline{U}_p;E_0)$. We use this to find
\[
\mathcal{J}_0 \sigma_p = \lim_{k\to\infty} \mathcal{J}_0 \sigma_{u_k,p} = \lim_{k\to\infty} \mathcal{J}_0 \psi_{t_{u_k},p}(s_{u_k}\vert_{\overline{U}_p}).
\]
From \Cref{eq:operatornormconverges} follows that
\[
\mathcal{J}_0 \sigma_p = \lim_{k\to\infty} \mathcal{J}_0 \psi_{t_{u_k},p}(s_{u_k}\vert_{\overline{U}_p}) = \lim_{k\to\infty} \psi_{t_{u_k},p} (\mathcal{J}_{t_{u_k}} s_{u_k}\vert_{\overline{U}_p}).
\]
Here we used that $\norm{s_{u_k}}_{\Gamma^2(\overline{U}_p; E_t)} = \norm{\sigma_{u_k}}_{\Gamma^2(\overline{U}_p; E_0)}$ remains bounded uniformly in $k$. Finally, using the fact that the sections $s_u$ are eigensections of the operators $\mathcal{J}_{t_u}$ gives
\[
\mathcal{J}_0 \sigma_p = \lim_{k\to\infty} \psi_{t_{u_k},p} (\mathcal{J}_{t_{u_k}} s_{u_k}\vert_{\overline{U}_p}) = \lim_{k\to\infty} \lambda(\mathcal{J}_{t_{u_k}}) \cdot \psi_{t_{u_k},p}(s_{u_k}\vert_{\overline{U}_p}) = \lambda \cdot \sigma_p
\]
because, by definition, $\lambda = \lim_{u\to\infty} \lambda_1(\mathcal{J}_{t_u})$.
\end{proof}

\section{Proof of \Cref{thm:exponentialconvergence}}
Our proof of \Cref{thm:exponentialconvergence} will rely on the fact that the Jacobi operator of the maps $f_t$ appears in the evolution equation for the quantity $\tau(f_t)$. Recall the notation $E_t = f_t^*TN$.

\begin{lemma}\label{lem:evolutionequation}
Assume the family of maps $(f_t)_{t\in [0,\infty)}$ satisfies the harmonic heat flow equation. Then
\[
\frac{1}{2} \der{}{t} \norm{\tau(f_t)}^2_{L^2(E_t)} = -\inner{\mathcal{J}_{f_t} \tau(f_t)}{\tau(f_t)}_{L^2(E_t)}.
\]
\end{lemma}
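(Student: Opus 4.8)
The plan is to establish the pointwise evolution identity
\[
\nabla_{\parder{}{t}}\tau(f_t) = -\mathcal{J}_{f_t}\tau(f_t)
\]
and then to differentiate the squared $L^2$ norm under the integral sign. It is convenient to work in the total space of the pullback bundle: set $F(m,t)=f_t(m)$ as in \Cref{sec:heatflowpreliminaries} and regard $\tau(f_t)$, $df_t$ and the velocity field $\parder{F}{t}$ all as (restrictions to the slices $M\times\{t\}$ of) sections of $F^*TN\to M\times[0,\infty)$, with $\nabla$ the pullback of the Levi-Civita connection of $N$. By \Cref{eq:heatflow} we have $\parder{F}{t}=\tau(f_t)$ along each slice.

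Next I would compute $\nabla_{\parder{}{t}}\tau(f_t)$ in local coordinates $(x^i)_{i=1}^m$ on $M$. Writing $\tau(f_t)=g^{ij}\bigl(\nabla_{\parder{}{x^i}}(df(\parder{}{x^j}))-df(\nabla^M_{\parder{}{x^i}}\parder{}{x^j})\bigr)$ and differentiating in $t$, two standard facts enter. First, since $[\parder{}{t},\parder{}{x^i}]=0$ and the Levi-Civita connections are torsion free, $\nabla_{\parder{}{t}}\bigl(df(X)\bigr)=\nabla_X\bigl(\parder{F}{t}\bigr)$ for every vector field $X$ on $M$. Second, the curvature of the pullback connection gives $\bigl(\nabla_{\parder{}{t}}\nabla_{\parder{}{x^i}}-\nabla_{\parder{}{x^i}}\nabla_{\parder{}{t}}\bigr)s=R^N\bigl(\parder{F}{t},df(\parder{}{x^i})\bigr)s$. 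Combining these, all $t$-derivatives land on $\parder{F}{t}$, and one obtains
\[
\nabla_{\parder{}{t}}\tau(f_t)=\tr_g\nabla^2\Bigl(\parder{F}{t}\Bigr)+\tr_g R^N\Bigl(\parder{F}{t},df\,\cdot\Bigr)df\,\cdot=-\mathcal{J}_{f_t}\Bigl(\parder{F}{t}\Bigr),
\]
where the last equality is just the definitions of $\Delta=-\tr_g\nabla^2$ and of $\mathcal{J}_{f_t}$. Using $\parder{F}{t}=\tau(f_t)$ gives the identity above. (Harmonicity of $f_t$ is not used here; this holds for any smooth family.)

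Finally I would differentiate the norm. Since $M$ is compact, $\vol_g$ is independent of $t$, and $F$ is smooth by \Cref{thm:heatflowfacts}, differentiation under the integral sign is justified; and since $\nabla$ is compatible with the fibre metric on $F^*TN$, we have $\parder{}{t}\abs{\tau(f_t)}^2=2\inner{\nabla_{\parder{}{t}}\tau(f_t)}{\tau(f_t)}$ pointwise. Therefore
\[
\frac{1}{2}\der{}{t}\norm{\tau(f_t)}^2_{L^2(E_t)}=\int_M\inner{\nabla_{\parder{}{t}}\tau(f_t)}{\tau(f_t)}\vol_g=-\inner{\mathcal{J}_{f_t}\tau(f_t)}{\tau(f_t)}_{L^2(E_t)},
\]
as claimed. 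The only delicate point is the sign bookkeeping in the curvature commutation formula, which must be matched against the paper's conventions for $R^N$ (the footnoted convention) and for $\mathcal{J}_{f_t}$ (recalling $\Delta=-\tr_g\nabla^2$); there is no analytic difficulty.
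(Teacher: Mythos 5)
Your proof is correct and follows essentially the same route as the paper's: both establish the pointwise identity $\nabla_{\parder{}{t}}\tau(f_t)=-\mathcal{J}_{f_t}\tau(f_t)$ by combining the symmetry $\nabla_{\parder{}{t}}(df(X))=\nabla_X\bigl(\parder{F}{t}\bigr)$ with the curvature commutation formula for the pullback connection, then differentiate the squared $L^2$ norm using metric compatibility of $\nabla$. The only cosmetic difference is that you compute in general coordinates and carry the Christoffel term $df(\nabla^M_{\parder{}{x^i}}\parder{}{x^j})$, whereas the paper works in normal coordinates centered at a point to suppress it; the underlying argument is identical.
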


\begin{proof}
Assume $(x^i)_{i=1}^m$ are Riemannian normal coordinates around a point $x \in M$. In the following calculation we will consider the expression $\parder{f_t}{x^\alpha}$ as a local section of $f_t^*TN$. Because we are working in normal coordinates around $x$, we have that
\[
\tau(f_t) \vert_x = \tr_{g} \nabla df\vert_x = \nabla_{\parder{}{x^i}}(\parder{f}{x^i})\Big\vert_x.
\]
We use this to find that at the point $x$ and for any $t\geq 0$ we have
\begin{align*}
\nabla_{\parder{}{t}}\tau(f_t) &= \nabla_{\parder{}{t}}\left(\nabla_{\parder{}{x^i}}\left(\parder{f}{x^i}\right)\right)\\&= R^N \left(\parder{f}{t}, \parder{f}{x^i} \right)\parder{f}{x^i} + \nabla_{\parder{}{x^i}}\nabla_{\parder{}{x^i}}  \left(\parder{f}{t}\right)\\
&= -\Delta \tau(f_t) + \tr_{g} R^N(\tau(f_t) , df \cdot) df \cdot = - \mathcal{J}_{f_t} \tau(f_t).
\end{align*}
To get the second equality we used that $\nabla_{\parder{}{t}}\parder{f}{x^i}=\nabla_{\parder{}{x^i}}\parder{f}{t}$ (see \cite[p.5]{EellsLemaireSelectedTopics}). Because $x\in M$ was arbitrary, we conclude that his equality holds everywhere. We use this to find that
\[
\frac{1}{2} \der{}{t} \norm{\tau(f_t)}^2_{L^2(E_t)} = \inner{\nabla_{\parder{}{t}}\tau(f_t)}{\tau(f_t)}_{L^2(E_t)} = - \inner{\mathcal{J}_{f_t} \tau(f_t)}{\tau(f_t)}_{L^2(E_t)}.
\]
\end{proof}

We can now give a proof of \Cref{thm:exponentialconvergence}.

\begin{proof}[Proof of \Cref{thm:exponentialconvergence}]
We apply \Cref{prop:spectralgapconvergence} to the family of maps $(f_t)_{t\in [0,\infty]}$. For this we pick some homeomorphism between $[0,\infty]$ and $[0,1]$ (mapping $\infty$ to $0$) so we can view the heat flow as a family of maps $(f_t)_{t\in [0,1]}$ indexed by $t\in [0,1]$. It then follows from \Cref{thm:heatflowfacts} that this family of maps satisfies the assumptions of \Cref{prop:spectralgapconvergence}. From this proposition follows that
\[
\liminf_{t\to\infty} \lambda_1(\mathcal{J}_{f_t}) \geq \lambda_1(\mathcal{J}_{f_\infty}).
\]
By assumption $f_\infty$ is a non-degenerate critical point of the energy so $\lambda_1(\mathcal{J}_{f_\infty}) > 0$. Put $b = \lambda_1(\mathcal{J}_{f_\infty})/2 > 0$. Then, for $t \geq t_0$ large enough we have $\lambda_1(\mathcal{J}_{f_t}) \geq b$. Using \Cref{lem:evolutionequation} and \Cref{eq:minmax} we see that for such $t\geq t_0$,
\[
\der{}{t} \norm{\tau(f_t)}_{L^2(E_t)}^2 = -2 \inner{\mathcal{J}_{f_t} \tau(f_t)}{\tau(f_t)}_{L^2(E_t)} \leq -2b \cdot \norm{\tau(f_t)}^2_{L^2(E_t)}.
\]
Gr\"onwalls's inequality (\cite{Gronwall}) yields that
\[
\norm{\tau(f_t)}_{L^2(E_t)}^2 \leq \norm{\tau(f_{t_0})}_{L^2(E_t)}^2 \cdot e^{-2b\cdot t}
\]
for $t\geq t_0$. So if we pick $a>0$ large enough, then
\[
\norm*{\der{f_t}{t}}_{L^2(E_t)} = \norm{\tau(f_t)}_{L^2(E_t)} \leq a\cdot e^{-b\cdot t}
\]
for all $t\geq 0$.
\end{proof}

We end with the proof of \Cref{cor:convergenceenergy}.
\begin{proof}[Proof of \Cref{cor:convergenceenergy}]
The evolution of the energy $E(f_t)$ along the harmonic heat flow is governed by the equation
\[
\der{}{t} E(f_t) = - \int_{M} \norm*{\tau(f_t)}^2 \vol_g = - \norm*{\der{f_t}{t}}^2_{L^2(E_t)}
\]
(see \cite[\S 6.C]{EellsSampson}). Applying the estimate of \Cref{thm:exponentialconvergence} gives
\[
\abs{E(f_t) - E(f_\infty)} = \int_{t}^\infty \norm*{\der{f_t}{t}}^2_{L^2(E_t)} dt \leq a\cdot \int_t^\infty e^{-2b\cdot t} dt \leq a'\cdot e^{-2b \cdot t}
\]
with $a'= a/(2b)$.
\end{proof}

\bibliographystyle{alpha}
\bibliography{bibliography}

\end{document}